\documentclass{amsart}
\usepackage{graphicx, amsmath, amsfonts, amssymb, amsthm, hyperref, enumerate, xparse} %
\usepackage{theoremref}
\newtheorem{theorem}{Theorem}
\newtheorem{lemma}[theorem]{Lemma}
\newtheorem{prop}[theorem]{Proposition}

\newcommand{\density}{\rho}
\newcommand{\critdens}{\density_\mathrm{c}}
\newcommand{\Z}{\mathbb{Z}}
\newcommand{\N}{\mathbb{N}}
\newcommand{\E}{\mathbf{E}}
\renewcommand{\P}{\mathbf{P}}
\newcommand{\s}{\mathfrak{s}}
\renewcommand{\j}{\mathfrak{j}}
\newcommand{\psleep}{p_\s}
\newcommand{\pjump}{p_\j}
\newcommand{\config}{\sigma}
\newcommand{\configg}{\tau}
\NewDocumentCommand{\weakconfig}{o}{%
  \config_{\Weak %
  \IfValueTF{#1}{, #1}{}%
  }
}
\NewDocumentCommand{\weakodom}{o}{%
  \odom_{\Weak %
  \IfValueTF{#1}{, #1}{}%
  }
}
\newcommand{\Ch}{\mathsf{Ch}}
\newcommand{\Secondchances}{\mathsf{ACh}}
\NewDocumentCommand{\Returns}{o}{%
  \mathsf{Returns}%
  \IfValueTF{#1}{(#1)}{}%
}

\newcommand{\JumpCount}{J_\origin}
\newcommand{\Weak}{\mathsf{W}}
\newcommand{\Strong}{\mathsf{S}}
\newcommand{\StabSymb}{\mathsf{S}}
\newcommand{\OdomSymb}{\mathsf{O}}
\NewDocumentCommand{\SpacedOp}{ O{} m m }{%
  {#2#3}%
  \IfValueTF{#1}{^{#1}\!}{}%
}
\NewDocumentCommand{\Stab}{o}{\SpacedOp[#1]{}{ \StabSymb }}
\NewDocumentCommand{\WeakStab}{o}{\SpacedOp[#1]{\Weak}{ \StabSymb }}
\NewDocumentCommand{\Odom}{o}{\SpacedOp[#1]{}{ \OdomSymb }}
\NewDocumentCommand{\WeakOdom}{o}{\SpacedOp[#1]{\Weak}{ \OdomSymb }}
\newcommand{\odom}{m}
\newcommand{\strongodom}{\odom_\Strong}
\newcommand{\Jump}{\mathsf{Jump}}
\newcommand{\instr}{\mathcal{I}}
\newcommand{\jumpinstr}[1]{\j_#1}
\newcommand{\origin}{\mathbf{0}}
\DeclareMathOperator{\Ber}{Bernoulli}

\title{Mean-field limit for activated random walk on the integer lattice}

\author{Matthew Junge} \address{Matthew Junge, Department of Mathematics, Baruch College, City University of New York}
	\email{\texttt{matthew.junge@baruch.cuny.edu}}
\author{Harley Kaufman}\address{Harley Kaufman, Department of Mathematics, Baruch College, City University of New York}
	\email{\texttt{Harkauf100@gmail.com} } 
\author{Josh Meisel}\address{Josh Meisel, Department of Mathematics, Graduate Center, City University of New York}
	\email{\texttt{jmeisel@gradcenter.cuny.edu}}

\begin{document}

\begin{abstract}
    We show that the critical density for activated random walk on $\mathbb{Z}^d$ approaches the sleep probability as $d \to \infty$ and provide the first-order correction.
\end{abstract}

\maketitle

\section{Introduction}

Activated random walk (ARW) is a mathematical model of the influential physics theory known as self-organized criticality \cite{bak1987soc, dickman2010activated, rollaSurvey}. Active particles perform independent continuous-time random walks at {jump rate} $1$, and fall asleep at {sleep rate} $\lambda > 0$. Sleeping particles do not move, but become active when visited by another particle. 
 The {fixed-energy} version of ARW on $\Z^d$ starts with a translation-ergodic initial configuration of active particles with mean $\density \in [0,\infty)$. We say the system {fixates} if every particle at some time falls asleep and is not woken up again. If this never occurs, then we say that the system {stays active}.  Treating $\lambda$ as a fixed constant, Rolla, Sidoravicius, and Zindy \cite{universalityTheorem} proved the existence of a {critical density} $\critdens = \critdens(\Z^d, \lambda)$ such that fixation occurs almost surely for $\density < \critdens$ and the system stays active almost surely for $\density > \critdens$.

Let $\psleep := \lambda/(1 + \lambda)$ and $\pjump := 1 - \psleep$ be the sleep and jump probabilities, respectively. One of the first rigorous results for ARW was the lower bound $\critdens \geq \psleep$, proved for $\critdens(\Z, \lambda)$ in \cite{rs} and subsequently generalized to $\critdens(\Z^d, \lambda)$ by Stauffer and Taggi \cite{StaufferTaggi18}.
A mean-field analysis of differential equations that treat the state of each site as statistically independent was conducted by Dickman, Rolla, and Sidoravicius in their seminal work on ARW \cite{dickman2010activated}. They inferred that the critical density in this idealized setting is $\psleep$. This is also the critical density for the special case of directed walks \cite{HS04, rollaSurvey, rt} and simple random walks on regular trees as the degree tends to infinity \cite[Theorem 1.2]{StaufferTaggi18}, \cite[Theorem 1.2]{Taggi2019}. 
J\'arai, M\"onch, and Taggi in \cite{completeGraph} studied ARW on the complete graph on $n+1$ vertices with a single site distinguished as a sink. They proved that the stationary density of sleeping particles left behind after stabilizing the system with one active particle at each non-sink site concentrates around $\psleep + \sqrt{\psleep\pjump\log n /n}$.

It is a hallmark of many statistical physics models that mean-field versions, akin to those in the previous paragraph, approximate the behavior in high dimension where spatial correlations diminish. Indeed, the authors of \cite{completeGraph} conjectured that $\critdens$ ought to be a ``perturbation" of $\psleep$ as $d$ increases. We prove this claim as well as pin down the first order of the perturbation.

\begin{theorem}\thlabel{thm:high-d} 
    For any fixed $\lambda > 0$ as $d \to \infty$
    $$
        \critdens(\Z^d, \lambda) = \psleep + \frac{\psleep\pjump}{2d} + O(d^{-2}).
    $$
\end{theorem}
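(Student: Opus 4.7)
The plan is to establish matching lower and upper bounds for $\critdens(\Z^d,\lambda)$ sharing the expansion $\psleep + \psleep\pjump/(2d) + O(d^{-2})$. Both directions rest on the Diaconis--Fulton site-wise representation, in which each site $x$ carries an i.i.d.\ sequence of instructions that are independently a sleep (probability $\psleep$) or a jump to each specified neighbor (probability $\pjump/(2d)$), and stabilization is abelian.

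For the lower bound I would sharpen the weak-stabilization argument of Stauffer and Taggi \cite{StaufferTaggi18}, which already yields $\critdens \geq \psleep$. Weak stabilization restricts the use of sleep instructions to sites at which the acting particle is alone, and the resulting weak odometer $\WeakOdom(\origin)$ controls fixation of the genuine ARW via the standard comparison. The mean-field prediction $\psleep$ corresponds to treating each jump performed during weak stabilization as landing on a region whose instructions are independent of everything consulted so far. The first-order correction comes from the first correction to that independence: after a jump from $\origin$ to a neighbor $e$ (probability $\pjump$), the next instruction at $e$ is, with conditional probability $\pjump/(2d)$, a jump back to $\origin$. I would encode these near-loops via the Green's function of simple random walk on $\Z^d$ with geometric killing at rate $\psleep$ per step, then plug the resulting expansion into a renewal equation for $\E[\WeakOdom(\origin)]$; summability of this expectation should hold exactly when $\density < \psleep + \psleep\pjump/(2d) + O(d^{-2})$, giving fixation below the threshold.

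For the matching upper bound, I would show non-fixation just above the stated value. The most natural route is a quantitative coupling with the complete-graph model of J\'arai, M\"onch, and Taggi \cite{completeGraph}: locally, the closed ball of radius one in $\Z^d$ resembles a star on $2d+1$ vertices, and the concentration of their stationary density around $\psleep$ with error $O(\sqrt{\log n/n})$ can seed a block/renormalization argument that propagates activity from a single active site to the whole lattice. An alternative is to analyze the true odometer directly from below, aiming to show divergence of its expectation at the same threshold by reversing the weak-stabilization inequality, possibly with the aid of a tailored branching-process envelope.

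The main obstacle will be the upper bound. The lower bound reduces to an explicit moment computation in which the high-dimensional Green's function supplies exactly the claimed $1/(2d)$ correction once the correct self-consistency equation is in hand, so the remaining bookkeeping is essentially mechanical. Non-fixation results, by contrast, have historically required inputs that are not perturbative in $1/d$, and producing an upper bound that aligns with the lower bound at the level of the $1/(2d)$ term appears to demand a carefully quantified comparison---either with the complete graph or with an engineered branching dominator---whose own error is $O(d^{-2})$.
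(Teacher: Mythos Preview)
Your lower-bound intuition --- that the $\psleep\pjump/(2d)$ correction comes from a particle jumping to a neighbor and immediately back --- is exactly right, and is what the paper exploits. But the technical route you sketch is not the Stauffer--Taggi argument and does not work as stated: the weak stabilizing odometer is \emph{dominated by} the true odometer (weak stabilization freezes a lone active particle at $\origin$, so it topples less), hence finiteness of its expectation says nothing about fixation of the genuine ARW. Stauffer--Taggi run the logic the other way: they assume supercriticality, deduce that weak stabilization a.s.\ places a particle at $\origin$, and then lower-bound $\P(\origin\in\Stab[V]\config)$; translation invariance and mass conservation force the initial density above that bound. The paper sharpens this density bound, not an odometer moment.

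The real gap is the upper bound, where you correctly flag the difficulty but have no concrete plan; neither a complete-graph coupling nor a branching envelope is anywhere close to producing an error of order $d^{-2}$. The ingredient you are missing, and which handles both directions at once, is the exact identity
\[
\P(\origin\in\Stab[V]\config)=\sum_{k\ge1}\psleep\pjump^{k-1}\P(\Ch\ge k),
\]
where $\Ch$ is the number of successive weak stabilizations that leave an active particle at $\origin$ before the configuration becomes strongly stable. For the lower bound one shows $\P(\Ch\ge2)\ge(2d)^{-1}(1-\psleep\pjump/(2d))$ via precisely your one-step-out, one-step-back event (plus the event that the neighbor held a sleeper that gets kicked back). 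For the upper bound, following Taggi, one observes that strong stabilization is the same as weak stabilization of $\config+\delta_\origin$; walking the extra particle out of $V$ first and then invoking the least action principle yields $\E[\max(\Ch-1,0)]\le\E[\Returns[\Z^d]]$, hence $\P(\origin\in\Stab[V]\config)\le\psleep+\psleep\pjump\,\E[\Returns[\Z^d]]$. Mass conservation then forces $\critdens$ below this quantity, and $\E[\Returns[\Z^d]]=(2d)^{-1}+O(d^{-2})$ completes the asymptotic.
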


Our result establishes concordance between mean-field, directed $\mathbb Z^d$, regular tree, complete graph, and $\mathbb Z^d$ ARW. Also, it gives a sharp up to first-order estimate on $\rho_c$ whose exact value for non-directed random walks is unknown. 

\thref{thm:high-d} is an immediate corollary of the following proposition. 
Denote the number of returns to $\origin$ by a random walk started there, not counting the initial occupation, by $\Returns = \Returns(\Z^d) \in \mathbb \{0,1,\hdots ,\infty\}$. 
\begin{prop}\thlabel{prop:ub-lb}
     For any $\lambda > 0$ and $d \ge 1$
    \begin{align}
    \psleep + \frac{\psleep\pjump}{2d}\left(1 - \frac{\psleep\pjump}{2d}\right) \le \critdens(\Z^d, \lambda) \le \psleep + \psleep\pjump \E[\Returns[\Z^d]]. \label{eq:lb}
    \end{align}
\end{prop}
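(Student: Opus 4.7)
I would prove the two bounds separately using the abelian/weak-stabilization framework, in which fixation on $\Z^d$ is equivalent to the limiting odometer $\odom(\origin)$ being almost surely finite along an exhaustion by finite boxes. Both halves exploit the i.i.d.\ structure of the per-site instruction stacks; the lower bound additionally exploits the ``chance'' versus ``additional chance'' dichotomy for when a particle sleeps, suggested by the macros $\Ch$ and $\Secondchances$.

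\emph{Lower bound.} The plan is to sharpen Stauffer--Taggi's proof of $\critdens \ge \psleep$, which rests on the balance $\density = \psleep\,\E[\weakodom(\origin)]$ arising from the fact that only a $\psleep$ fraction of topplings in weak stabilization produce a sleeping particle. The refinement enlarges the pool of ``sleep-producing'' events by looking one instruction deeper in the stacks. Fix a distinguished direction $+e_1$ and call a site $x$ \emph{chained} if its first instruction is a jump to $+e_1$ and the first instruction at $x+e_1$ is a sleep; this event has probability $\psleep\pjump/(2d)$ and is disjoint from ``first instruction at $x$ is sleep.'' Counting chained sites as additional sleep-producing events boosts the effective sleep probability to $\psleep + \psleep\pjump/(2d)$, from which one must subtract the overlap $(\psleep\pjump/(2d))^2$ that captures the competition between a chained particle from $x$ and the native particle at $x+e_1$ for the same sleep slot. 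Substituting the sharpened effective sleep probability $\psleep + \frac{\psleep\pjump}{2d}(1-\frac{\psleep\pjump}{2d})$ into the balance delivers the claimed lower bound.

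\emph{Upper bound.} I would show non-fixation for $\density > \psleep + \psleep\pjump\E[\Returns]$ by controlling the jump count $\JumpCount$ at the origin from below. Using the abelian property to fire particles one at a time, the contribution of a single particle's simple random walk to $\JumpCount$ is $1 + \E[\Returns]$ visits to the origin in expectation, with a $\psleep$ fraction of these visits consumed by sleep instructions and the complementary $\pjump\E[\Returns]$ fraction feeding back into further jumps. Aggregating over density $\density$ particles and imposing the fixation constraint that at most one sleep instruction per site can hold a sleeping particle yields the threshold $\psleep + \psleep\pjump\E[\Returns]$; above it, $\E[\JumpCount]$ diverges, contradicting fixation.

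The main obstacle in both directions is handling the dependencies among particles: wake-ups in the strong dynamics and competition over sleep instructions in the weak stabilization both obscure the clean single-particle random walk picture. The abelian framework reduces the analysis to a sequence of single-particle firings, but nailing down the precise correction $(1-\psleep\pjump/(2d))$ in the lower bound and the Green's-function identity $\psleep\pjump\E[\Returns]$ in the upper bound will require delicate combinatorial accounting at the level of individual stack positions, with inclusion--exclusion over pairs of adjacent stacks being the likely source of subtlety.
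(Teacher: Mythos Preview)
Your proposal has genuine gaps in both directions, and in fact you have inverted where the $\Ch$/$\Secondchances$ machinery actually does its work.

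\textbf{Lower bound.} Your ``chained'' event puts a sleeper at $x+e_1$, not at $x$: if the first instruction at $x$ is $\jumpinstr{x+e_1}$ and the first instruction at $x+e_1$ is $\s$, the particle originating at $x$ sleeps at $x+e_1$. This does not boost the probability that $x$ itself is occupied in the stable configuration, so it cannot feed into a translation-invariance density argument at the origin. The alleged balance $\density=\psleep\,\E[\weakodom(\origin)]$ is also not the Stauffer--Taggi mechanism; their bound is simply $\P(\origin\in\Stab[V]\config)\ge\psleep$ via a single weak stabilization. The paper's lower bound instead uses exactly the $\Ch$ framework: after filling $B_1$ and weakly stabilizing, it jumps the lone particle out of $\origin$ to a random neighbor $X$, then shows that either (i) $X$ held a sleeper (probability $\ge\psleep$) and one of the two now-active particles jumps back to $\origin$, or (ii) the lone particle at $X$ jumps back. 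These two events give $\P(\Ch\ge2)\ge\frac{1}{2d}(1-\frac{\psleep\pjump}{2d})$, and plugging into \thref{lem:indie-trials} with $\P(\Ch\ge1)=1$ yields the bound. The correction factor arises from inclusion--exclusion of two \emph{return} events, not from competition over a shared sleep slot.

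\textbf{Upper bound.} Your plan to force $\E[\JumpCount]=\infty$ by summing $1+\E[\Returns]$ over particles treats the particles as independent random walks, which they are not; the wake-up interactions are precisely what makes ARW hard, and you offer no mechanism to control them. The paper goes the opposite way: it bounds $\P(\origin\in\Stab[V]\config)$ \emph{from above} via \thref{lem:indie-trials}, getting $\psleep+\psleep\pjump\,\E[\Secondchances]$, and then proves $\E[\Secondchances]\le\E[\Returns]$ by identifying strong stabilization with weak stabilization of $\config+\delta_\origin$, walking the extra particle out of $V$ first (contributing at most $\Returns$ jumps into $\origin$), and invoking the least action principle. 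Mass conservation then forces $\density\le\psleep+\psleep\pjump\,\E[\Returns]$ on fixation. The single-particle Green's-function heuristic you describe is the right intuition, but it is made rigorous through the extra-particle trick and least action, not through aggregating independent walks.
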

\begin{proof}[Proof of \thref{thm:high-d}]
The lower bound in \thref{prop:ub-lb} is $\psleep + \psleep\pjump (2d)^{-1} + O(d^{-2}).$ The upper bound is also of this order since $\E[\Returns[\Z^d]] = (2d)^{-1} + O(d^{-2})$ (see \cite[Equation A.6]{high-d-rw}).
\end{proof}

\thref{prop:ub-lb} could be extended to obtain a mean-field limit on more general graphs with more general random walk kernels, as well as sharp asymptotics for the critical density as $\lambda \to 0$ on transient graphs. 
We chose to focus on $\mathbb Z^d$ for the sake of simplicity.  
More details are in Section~\ref{sec:extensions}. 

\subsection*{Organization}

Section~\ref{sec:sketch} sketches the proof of \thref{prop:ub-lb}.
Section~\ref{subsection:site-wise} describes the sitewise representation of ARW as well as some important properties. 
Section~\ref{sec:weak-stab} contains the definitions of weak and strong stabilizations and proofs of a few useful relations.
Section~\ref{sec:proofs} has the proof of \thref{prop:ub-lb}.
Section~\ref{sec:extensions} discusses extensions and further consequences of \thref{prop:ub-lb}.

\section{Proof overview for \thref{prop:ub-lb}} \label{sec:sketch}

We obtain our bounds via improvements to the approaches in  \cite{StaufferTaggi18} and \cite{Taggi2019}. Along with the technical inferences in \thref{lem:fill} and \thref{cor:fill}, our main tool is a simple observation  in \thref{lem:indie-trials} that connects the probability the origin is occupied to the number of chances a particle has to fall asleep there when repeatedly weakly stabilizing. This fact was already known \cite{rollaSurvey}, but it appears that it has not been applied in a precise enough manner to deduce the asymptotics of $\rho_c$. Our main contribution is pairing the formula with sharp enough estimates---\thref{lem:expected-chances} and the proof of the lower bound in \thref{prop:ub-lb}---to pin down the exact first-order asymptotics of $\rho_c$. We give more details below.

\subsection{Lower bound}

To lower bound the critical density, we extend the argument of Stauffer and Taggi \cite{StaufferTaggi18} establishing $\psleep$ as a lower bound. There one stabilizes the configuration $\config$ on a large finite set, say $V_n := \{-n, \ldots, n\}^d \subseteq \Z^d$, by first \emph{weakly stabilizing} $V_n$, that is stabilizing $V_n$ but freezing a single active particle at $\origin$ as soon as one enters. An active particle occupies $\origin$ after weak stabilization if and only if the full stabilizing odometer is positive at $\origin$, so starting with a supercritical configuration, this is guaranteed for large $V_n$. After the weak stabilization then there is a $\psleep$ probability the particle immediately falls asleep, producing a stable configuration $\Stab[V_n]\,\config$ with $\origin$ occupied, which we denote by $\origin \in \Stab[V_n]\,\config$. Then by translation invariance, $\Stab[V_n]\,\config$ has at least density $\psleep$ on sites far from $\partial V_n$, which by amenability implies the supercritical starting density must have been at least $\psleep$. 

We increase this bound by including the probability that the particle at $\origin$ does not fall asleep immediately, but then either returns and falls asleep or wakes a neighboring sleeping particle that returns and falls asleep.

\subsection{Upper bound}

We upper bound the density of $\Stab[V]\config$ at $\origin$. This in turn upper bounds the critical density, since at density $\density < \critdens$, due to fixation, $\Stab[V]\,\config$ has density close to $\density$ as $V \nearrow \Z^d$. 
To do so, we continue checking if $\origin \in \Stab[V]\,\config$ via successive weak stabilizations. Each weak stabilization that ends with $\origin$ occupied produces a $\psleep$ chance to have $\origin \in \Stab[V_n]\,\config$, while a weak stabilization that ends with $\origin$ unoccupied completes the stabilization with $\origin \notin \Stab[V]\config$. In \cite{Taggi2019}, the outcomes of the $\Ber(\psleep)$ trials were decoupled from the number of trials. We denote this number of trials by $\Ch$, and refer to it as the number of \emph{chances}. The decoupling is achieved through what Rolla terms \emph{strong stabilization} \cite[Section~7]{rollaSurvey}. In this procedure, the particle is toppled out of $\origin$ regardless of whether it falls asleep. The weak stabilizations are then repeated until one ends with $\origin$ empty, that is, until $V$ is \emph{strongly stable}.

In \cite{Taggi2019} it is proved that $\E[\Ch] \le \E[\Returns + 1]$. This is done by equating strong stabilization with weak stabilization after placing an extra particle at the origin, and then cleverly applying the abelian property and enforced activation, toppling particles in two different orders and comparing the odometers. We modify their argument in \thref{lem:expected-chances} to obtain the slightly stronger statement that the expected number of {additional chances} is at most $\E[\Returns]$. A first-chance success occurs with probability $\psleep$ and the probability of an additional-chance success that follows a first-chance failure is no more than $\pjump \psleep \E[\Returns]$ using Markov's inequality. 

\section{The site-wise representation and some basic properties}\label{subsection:site-wise}

 The definitions and properties introduced in this section are standard. For more details see \cite[Section 2]{rollaSurvey}. In the \emph{site-wise} or Diaconis-Fulton representation of ARW, there are random {instruction stacks} $\instr = (\instr_x(k))_{x \in \Z^d, k \ge 0}$.
The stacks $\instr_x = (\instr_x(k))_{k \ge 0}$ are independent and composed of sleep instructions $\s$ and jump instructions $\jumpinstr{y}$ for $y \sim x$. The stack $\instr_x$ is i.i.d.\ with $\P\left(\instr_x(k) = \s\right) = \psleep$ and $\P\left(\instr_x(k) = \jumpinstr{y}\right) = \pjump/(2d)$ for each $y \sim x$. 

When stabilizing a finite set of vertices $V \subset \Z^d$, killing particles that leave $V$, we represent the state of the system by the {particle \emph{configuration} $\config\colon V \to \N \cup \{\s\}$ and \emph{odometer} $\odom\colon V \to \N$, 
where $\config$ indicates the number of active particles at a site or the presence of a sleeping particle, and $\odom$ counts how many instructions have been used at each site. A site $x \in V$ is {unstable} if it contains at least one active particle. {Toppling} an unstable site $x$ has the effect of updating the configuration according to instruction $\instr_x(\odom(x))$ and then increasing $\odom(x)$ by $1$. The {abelian property} says that if one starts from state $(\config, \odom)$, and stabilizes by toppling unstable sites in any order until every $x \in V$ is stable, the resulting state is always the same. It is composed of 
\[
    \Stab[V](\config, \odom) \in \{0, \s\}^V \text{ and } \Odom[V](\config, \odom) \in \mathbb N ^V,
\]
which we call the \emph{stable configuration} and \emph{stabilizing odometer} for $(\config, \odom)$, respectively.
\begin{prop}[Abelian property]
    Fix finite $V \subset \Z^d$, configuration $\config$ on $V$, and odometer $\odom$. Stabilizing from $(\config, \odom)$ in any order produces stable configuration $\Stab[V](\config, \odom)$ and stabilizing odometer $\Odom[V](\config, \odom)$.
\end{prop}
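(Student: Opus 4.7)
The proposition is the standard abelian property for the site-wise representation, so I would follow the textbook two-step strategy: a local commutation (``diamond'') lemma followed by a least-action induction.

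First I would prove \emph{local commutativity}: if $x \ne y$ are both unstable in $(\config, \odom)$, then toppling $x$ and then $y$ produces the same state as toppling $y$ and then $x$. If $x \not\sim y$, the two topplings act on disjoint immediate neighborhoods (their jump destinations might share a common neighbor $z$, but the additive contribution to $\config(z)$ is order-independent), and a sleep instruction at $x$ depends only on $\config(x)$, which is untouched by a toppling at $y$. When $x \sim y$, I would do a small case analysis on the pair of instructions. The delicate subcase is a sleep at $y$ combined with a jump from $x$ to $y$: if $\config(y) = 1$ initially, then toppling $y$ first turns it into a sleeping site, which is immediately woken to two active particles by the incoming jump from $x$, while toppling $x$ first brings $\config(y)$ to $2$ so the subsequent sleep instruction is consumed with no effect; both orders land on $\config(y) = 2$ and identical odometer increments.

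Second I would run the \emph{least-action} induction: for any legal toppling sequence $T$ from $(\config, \odom)$ with per-site toppling counts $u_T$, and any complete stabilization $T'$ with per-site counts $u_{T'}$, I claim $u_T \le u_{T'}$ pointwise, by strong induction on the length of $T$. The inductive step uses local commutativity to permute $T$ so that matching topplings in $T$ and $T'$ occur in the same order; if $T$ were to topple a site $x$ with $u_T(x) = u_{T'}(x)$, then the state of $x$ at that moment would coincide with its stable state in $T'$, contradicting legality. Applied to two complete stabilizations this gives $u_T = u_{T'}$; since each site's final state is a deterministic function of its instruction stack together with its own odometer and those of its neighbors, equal odometers force equal stable configurations.

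The main obstacle is the non-monotone nature of the sleep dynamics: a site's active-particle count can decrease via a successful sleep but can also rebound when a neighbor jumps a particle in and wakes the sleeper. Consequently, unlike in the abelian sandpile, the state at $x$ after a sequence of topplings is not a linear function of the odometers, and both the diamond-lemma case analysis and the stability check in the inductive step must carefully track when a sleep instruction takes effect versus is consumed without change. Once local commutativity is nailed down, however, the remainder is a standard confluence argument for a terminating rewriting system.
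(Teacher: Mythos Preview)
The paper does not actually prove this proposition: it is stated as a standard fact about the site-wise representation, with the reader referred to \cite[Section~2]{rollaSurvey} for details. Your sketch is the classical two-step argument (local commutation plus least-action induction) that one finds in that reference and its antecedents, so in content you are reproducing the cited proof rather than offering an alternative.

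One small caution on your induction step. You blend two slightly different arguments: the ``permute $T'$ to start with the first site of $T$'' induction, and the ``first overshoot'' contradiction. The latter, as you phrase it, asserts that when $T$ is about to topple $x$ for the $(u_{T'}(x)+1)$-st time, the state at $x$ coincides with its stable state in $T'$. For the abelian sandpile this follows from linearity of the height in the odometers, but for ARW---as you yourself note---the active/sleeping status of $x$ is not a monotone or linear function of the neighboring odometers, so this claim needs more care than you give it. The cleaner route is the first one: show that if $\alpha$ is legal and $\beta$ is complete, then the first site $x_1$ of $\alpha$ must appear in $\beta$ (since $x_1$ is unstable and stays unstable until toppled), use local commutativity to move that occurrence to the front of $\beta$, and recurse on the tails. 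This avoids any appeal to monotonicity of the state in the odometer and is the form of the argument in the survey the paper cites.
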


Typically the odometer starts from zero, so we define $\Stab[V]\config := \Stab[V](\config, 0_V)$ and similarly for $\Odom[V]\config$. Note that we will write $x \in \Stab[V] \config$ to denote $(\Stab[V]\sigma)(x) = \s$, the stabilization leaving a sleeping particle at $x \in V$. We can stabilize configurations $\config\colon W \to \N \cup \{\s\}$ on larger vertex sets $W \supseteq V$ by letting $\Stab[V](\config, \odom) := \Stab[V](\config|_V, \odom)$ and similarly for $\Odom[V](\config, \odom)$.

The \emph{least action principle} states that waking up a sleeping particle via enforced activation only increases the stabilizing odometer. That is, call a site \emph{acceptable} to topple if it is nonempty. Toppling a site with a sleeping particle first wakes up the particle, and then executes the next instruction there. With $\succeq$ denoting pointwise domination, \cite[Lemma 2.1]{rollaSurvey} states that:

\begin{prop}[Least Action Principle]
    For any configuration $\config$ on $V$ and odometer $\odom$, if odometer $\odom'$ is produced by stabilizing from $(\config, \odom)$ via acceptable topplings, then $\odom' \succeq \Odom[V](\config, \odom)$.
\end{prop}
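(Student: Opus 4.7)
The plan is to prove the Least Action Principle by comparing a fixed legal stabilizing toppling sequence, whose associated odometer is $\Odom[V](\config, \odom)$, against the given acceptable sequence with odometer $\odom'$. The standard route (as in Rolla's survey) is an inductive monotonicity argument in which I process the legal sequence one topple at a time and show that its partial odometer never exceeds $\odom'$ anywhere.

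More concretely, fix some legal order of topplings that stabilizes $(\config, \odom)$, and let $\tau^{(k)}$ denote the odometer vector after the first $k$ legal topplings, so $\tau^{(0)} \equiv 0$ and $\tau^{(K)} = \Odom[V](\config, \odom)$. I would prove by induction on $k$ that $\tau^{(k)} \preceq \odom'$. The base case is vacuous. For the inductive step, suppose $\tau^{(k)} \preceq \odom'$ and that the $(k+1)$-st legal topple is at site $x$; it suffices to show $\tau^{(k)}(x) < \odom'(x)$.

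Toward a contradiction, suppose $\tau^{(k)}(x) = \odom'(x)$. Since both toppling counts at $x$ consume exactly the same prefix $\instr_x(0), \ldots, \instr_x(\tau^{(k)}(x) - 1)$ of the instruction stack, the contribution of topplings at $x$ to the state at $x$ is identical in both runs: the same net departures and the same number of sleep instructions executed there. By contrast, the number of particles that arrive at $x$ from any neighbor $y$ is monotone nondecreasing in $\tau(y)$ (since increasing $\tau(y)$ only appends more instructions from $\instr_y$), and by the inductive hypothesis $\tau^{(k)}(y) \le \odom'(y)$. Consequently, the state produced by executing $\odom'$ has at least as many active particles at $x$ as the state produced by $\tau^{(k)}$, with any sleeping particle at $x$ in the $\tau^{(k)}$ state being woken by the excess arrivals in the $\odom'$ state. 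Since $x$ is unstable under $\tau^{(k)}$ (this is exactly the legality of the $(k+1)$-st topple), it must also be unstable under $\odom'$, contradicting that the acceptable sequence ends in a stable configuration.

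The main obstacle is ensuring the monotonicity comparison is watertight in the presence of sleeping particles and enforced activations. Acceptable topplings may wake and re-topple sleeping particles that formed at $x$ during the $\odom'$-run, which inflates $\odom'(x)$ via enforced-activation contributions beyond what a purely legal sequence would use. This only makes the hypothetical equality $\tau^{(k)}(x) = \odom'(x)$ harder to attain, so it strengthens rather than threatens the argument; however, stating the active-particle comparison precisely (with sleeping states accounted for as either $0$ or $1$ active particle once woken) is the part of the proof that requires the most care.
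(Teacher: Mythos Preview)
The paper does not actually prove this proposition; it is stated as a background result with a citation to \cite[Lemma~2.1]{rollaSurvey}. Your argument is precisely the standard inductive comparison found there, and it is correct. The concern you flag about sleeping particles and enforced activations is not a real obstacle: once $\tau^{(k)}(x) = \odom'(x)$, the mass at $x$ under $\odom'$ is at least that under $\tau^{(k)}$ (same departures, at least as many arrivals), and in the boundary case of equal mass $1$ the sleeping/active status is in fact determined by whether $\instr_x(h(x)-1) = \s$, independently of the order or of any enforced activations, so the two states at $x$ coincide. One minor slip: since you stabilize from $(\config, \odom)$ rather than $(\config, 0)$, your partial odometers should start at $\tau^{(0)} = \odom$, not $\tau^{(0)} \equiv 0$; otherwise the equality $\tau^{(K)} = \Odom[V](\config, \odom)$ is off by $\odom$.
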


For a configuration $\config\colon \Z^d \to \N \, \cup \, \{\s\}$ on $\Z^d$ and fixed instructions $\instr$, the stabilizing odometer is nondecreasing in $V$. Therefore, we may define the possibly infinite odometer $\odom_{\config}\colon \Z^d \to \N \cup \{\infty\}$ for the system on all of $\Z^d$ by
$$\odom_{\config}  := \lim_{n \to \infty} \Odom[V_n]\,\config.$$
Rolla and Tournier \cite{rt} showed that particle fixation is equivalent to site and odometer fixation in the sense of \thref{prop:site-fixation} below. Recall the defintions of fixation and staying active from the introduction. Below and throughout, we always use a measure where starting configurations $\config$ are independent from the instructions $\instr$.

\begin{prop}[Equivalence of particle and site fixation]\thlabel{prop:site-fixation}
For any translation-ergodic configuration $\config\colon \Z^d \to \N \cup \{\s\}$ it holds that
$$
\P(\config\text{ fixates})=\P(m_{\config}(\origin)<\infty)\in \{0,1\}.
$$
\end{prop}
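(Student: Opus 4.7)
The plan is to prove the chain
\[
    \{\config \text{ fixates}\} \;=\; \{\odom_\config(x) < \infty \text{ for every } x \in \Z^d\} \;=\; \{\odom_\config(\origin) < \infty\}
\]
(modulo null sets) and then invoke ergodicity to obtain the 0-1 law on the common middle event. The first equality in the proposition and the $\{0,1\}$ conclusion both follow immediately from this chain.

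For the left-hand equality I would use the standard coupling between continuous-time ARW and the site-wise representation: a toppling at $x$ in continuous time consumes the next instruction from $\instr_x$, and by the abelian property the total number of topplings at $x$ over the whole trajectory agrees with $\odom_\config(x)$. If every particle fixates, then past some random time no site ever has an active particle, so only finitely many topplings occur at each site and each $\odom_\config(x)$ is finite. Conversely, when every $\odom_\config(x)$ is finite, each particle is toppled finitely many times and settles into a sleeping rest position, hence every particle fixates.

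For the right-hand equality I would run a deterministic ``spreading'' argument. The inclusion $\supseteq$ is trivial. For the other direction, I will show that $\odom_\config(x) = \infty$ forces $\odom_\config(y) = \infty$ at every neighbor $y \sim x$; iterating along a lattice path to $\origin$ then completes the step. Monotonicity in $n$ gives $\Odom[V_n]\config(x) \to \infty$, and because $\instr_x$ is i.i.d.\ with $\P(\instr_x(k) = \jumpinstr{y}) = \pjump/(2d) > 0$, the strong law yields
\[
    \#\{\, k < \Odom[V_n]\config(x) : \instr_x(k) = \jumpinstr{y}\,\} \to \infty.
\]
Each such instruction delivers an active particle from $x$ to $y$ during the stabilization of $V_n$, and since $\Stab[V_n]\config \in \{0,\s\}^{V_n}$ cannot accumulate active particles, each arrival forces at least one toppling at $y$. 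Hence $\Odom[V_n]\config(y)$ also tends to infinity, giving $\odom_\config(y) = \infty$.

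Finally, the middle event $A := \{\odom_\config(x) < \infty \text{ for every } x \in \Z^d\}$ is translation-invariant on the product space of configurations and instruction stacks. Translation-ergodicity of $\config$ combined with the i.i.d.\ structure of $\instr$ (independent of $\config$) makes the joint law translation-ergodic, since an ergodic system tensored with a mixing system is ergodic; thus $\P(A) \in \{0,1\}$. The main obstacle is the accounting in the spreading step: one needs the strong law to apply at the random cutoff $\Odom[V_n]\config(x)$, which is fine because this cutoff a.s.\ tends to $\infty$ on the event of interest, and one must confirm that arriving particles at $y$ really do force topplings, even when sleep instructions may be ``wasted'' in the presence of several active particles at the same site. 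Once these technicalities are checked, the rest of the argument is routine.
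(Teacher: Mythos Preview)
The paper does not supply its own proof of this proposition; it simply cites Rolla--Tournier \cite{rt} (see the sentence immediately preceding the proposition). So there is no ``paper's proof'' to compare against, and your sketch should be judged on its own.

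Your spreading argument for $\{\odom_\config(\origin)<\infty\}=\{\odom_\config(x)<\infty\ \forall x\}$ and the ergodicity argument for the $0$--$1$ law are both sound. One minor correction: it is not that \emph{each} arrival at $y$ forces a toppling, but that the number of jump-departures from $y$ during the $V_n$-stabilization is at least $(\text{arrivals at }y)+\config(y)-1$, by mass balance and the fact that $\Stab[V_n]\config(y)\in\{0,\s\}$; this is enough.

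The real gap is the left-hand equality. You write that ``by the abelian property the total number of topplings at $x$ over the whole trajectory agrees with $\odom_\config(x)$,'' but the abelian property in this paper (and in the standard framework) is stated only for stabilization of a \emph{finite} set $V$. The continuous-time process on all of $\Z^d$ is not covered by that statement, and matching its odometer to $\lim_n \Odom[V_n]\config$ is precisely the nontrivial content of the Rolla--Tournier result you are trying to prove. One has to argue, for instance, that on the event of non-fixation the continuous-time dynamics a.s.\ performs every toppling in the $V_n$-stabilization for each $n$ (so its odometer dominates $\odom_\config$), and conversely that if $\odom_\config$ is finite everywhere then the continuous-time process cannot topple any site more than $\odom_\config(x)$ times. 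Neither direction is a one-line consequence of the finite-volume abelian property; both require the ``local finiteness'' and exhaustion arguments developed in \cite{rt} (or \cite[Section~2]{rollaSurvey}). You have identified the wrong step as the main obstacle: the spreading accounting is routine, while the continuous-time/site-wise identification is where the work lies.
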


Therefore, on fixation there is a limiting stable configuration $$\Stab\, \config := \lim_{V \nearrow \Z^d}\Stab[V]\config,$$
since for each $x \in V$ it can be shown that $x \in \Stab[V]$ if and only if the stabilizing odometer ends on a sleep instruction (or no instructions at $x$ are used and $\config(x) = \s$). Under i.i.d.\ conditions $\Stab\,\config$ has the same density as $\config$ \cite{AmirGurel-Gurevich10}.

\begin{prop}[Mass conservation]\thlabel{prop:mass-conservation}
    For any i.i.d.\ initial configuration $\config\colon \Z^d \to \N \cup \{\s\}$ with density $\density = \E \left|\config(\origin)\right|$, if the fixed-energy system starting from $\config$ a.s.\ fixates then 
    $
        \E\left|(\Stab\,\config)(\origin)\right| = \density.
    $
\end{prop}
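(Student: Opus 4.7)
My approach is the antisymmetric edge-flux mass-balance argument. By \thref{prop:site-fixation} and a.s.\ fixation, $\odom_\config(x) < \infty$ at every $x$ a.s., so $\Odom[V_n]\,\config \nearrow \odom_\config$ and $\Stab[V_n]\,\config \to \Stab\,\config$ pointwise along the exhaustion $V_n := \{-n,\dots,n\}^d$. For each ordered pair of neighbors $(x,y)$ in $\Z^d$ define the total edge flux
\[
    \Phi(x,y) := \#\bigl\{\, 0 \le k < \odom_\config(x) : \instr_x(k) = \jumpinstr{y}\,\bigr\},
\]
the total number of particles ever sent from $x$ to $y$ under $\Z^d$-stabilization. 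Sleep instructions preserve particle counts and each jump instruction moves exactly one particle, so balancing arrivals and departures at each site yields the pointwise conservation identity
\[
    |\Stab\,\config(x)| - |\config(x)| = \sum_{y \sim x}\bigl(\Phi(y,x) - \Phi(x,y)\bigr).
\]

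The input $(\config,\instr)$ is i.i.d.\ across sites and hence translation-ergodic, and on the full-measure fixation event the triple $(\Stab\,\config, \odom_\config, \Phi)$ is a shift-equivariant measurable function of $(\config,\instr)$, so inherits translation ergodicity. Summing the pointwise identity over $V_n$, antisymmetric pairs for edges internal to $V_n$ cancel, leaving only edges straddling $\partial V_n$:
\[
    \sum_{x \in V_n}\bigl(|\Stab\,\config(x)| - |\config(x)|\bigr) = \sum_{\substack{x \in V_n,\, y \notin V_n \\ y \sim x}}\bigl(\Phi(y,x) - \Phi(x,y)\bigr).
\]
Dividing by $|V_n|$ and invoking the $\Z^d$ ergodic theorem sends the left-hand side a.s.\ to $\E|\Stab\,\config(\origin)| - \density$, so the result reduces to showing that the right-hand side is $o(|V_n|)$ a.s.

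Controlling this boundary flux is the main obstacle, since $\odom_\config(\origin)$ need not be integrable and one cannot uniformly bound $\E|\Phi(\origin, y)|$ directly. My plan is to work with the finite-volume truncations $\Phi_n(x,y) := \#\{\, k < \Odom[V_n]\,\config(x) : \instr_x(k) = \jumpinstr{y}\,\}$, whose boundary sum equals exactly the count of particles expelled from $V_n$ during $\Stab[V_n]\,\config$. A translation-ergodic estimate bounds this expulsion count by the mass of initial particles whose trajectories reach a thin collar near $\partial V_n$, which is $o(|V_n|)$, and monotone convergence $\Phi_n \nearrow \Phi$ bridges the finite-volume bound back to $\Phi$. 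Making this interchange of limits rigorous is the technical crux, and is essentially the content of the Amir--Gurel-Gurevich theorem cited above.
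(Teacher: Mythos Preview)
The paper does not supply its own proof of this proposition; it simply states the result and attributes it to \cite{AmirGurel-Gurevich10}. So there is no paper argument to compare against, only the question of whether your sketch stands on its own.

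Your flux-balance setup is correct and standard: the pointwise identity holds, internal edges cancel, and the left side converges by the ergodic theorem (both $|\Stab\,\config(\origin)|$ and $|\config(\origin)|$ are integrable). The gap is exactly where you say it is---the boundary term. You outline a plan (finite-volume truncations, collar estimate, monotone convergence) but do not carry it out, and then close by saying that making it rigorous ``is essentially the content of the Amir--Gurel-Gurevich theorem cited above.'' That is circular: you are invoking the result you are meant to prove to justify the step you have not done. As written, this is a proof sketch that defers its only nontrivial step to the reference.

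If you want a self-contained argument, the cleaner route (and the one closer in spirit to \cite{AmirGurel-Gurevich10}) avoids the boundary-flux estimate entirely via the mass-transport principle. On fixation each particle has a well-defined terminal site; set $f(x,y)$ to be the expected number of particles starting at $x$ that terminate at $y$. Then $\sum_y f(\origin,y) = \E|\config(\origin)| = \density$ and $\sum_x f(x,\origin) = \E|(\Stab\,\config)(\origin)|$, and translation invariance on $\Z^d$ (a unimodular group) gives $\sum_y f(\origin,y) = \sum_x f(x,\origin)$. This sidesteps any integrability of the odometer or control of boundary fluxes.
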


\section{Weak and strong stabilization}\label{sec:weak-stab}
Take finite $V \subset \Z^d$ and some $U \subseteq V$. Following the definition of the usual stabilization on $V$ from Section~\ref{subsection:site-wise}, which we call the true stabilization when the distinction is needed, we define weak and strong stabilization with respect to $U$. Call $x \in U$ \emph{weakly} (resp., \emph{strongly}) stable with respect to $U$ if $\config(x) \in \{0, \s, 1\}$ (resp., $\config(x)=0$). For $x \in V \setminus U$ the three definitions of stability coincide. \emph{Weakly stabilizing} with respect to $U$ consists of toppling weakly unstable sites until the configuration is weakly stable. This can be viewed as the usual stabilization but with $\lambda=\infty$ on $U$. Define \emph{strongly stabilizing} similarly, where it as if $\lambda=0$ on $U$. 
As noted in \cite{StaufferTaggi18, rollaSurvey}, the abelian property goes through by the same proof in both cases, so there is a well-defined \emph{weakly stable configuration} and \emph{weak stabilizing odometer} \[
    \WeakStab[V,U](\config, \odom), \qquad \WeakOdom[V,U](\config, \odom),
\] as well as the \emph{strong stable configuration} and \emph{strong stabilizing odometer}. The least action principle continues to hold, that is any odometer obtained by acceptably toppling sites until the configuration is weakly (resp. strongly) stable dominates the weak (resp. strong) stabilizing odometer. If the reference to $U$ is omitted, it is assumed that $U = \{\origin\}$, and weakly/strongly stabilizing with respect to $x \in V$ means with respect to $\{x\}$.

We define an iterative three-step procedure for strongly stabilizing an active configuration $\config\colon V \to \N$ with respect to $\origin$. The procedure begins with a single pre-step, weakly stabilizing with respect to the origin once to obtain the configuration $\weakconfig[0]$ and odometer $\weakodom[0]$, which by the abelian property are uniquely determined by $\config, V, \instr$. If $\weakconfig[0](\origin) = 0$ we are done. Otherwise, $\weakconfig[1](\origin) = 1$, and we complete iteration $k=1$:

\begin{enumerate}
    \item (The \emph{jump-out}) Acceptably topple the particle at $\origin$ until it jumps out.
    \item Weakly stabilize with respect to $\origin$, leading to state $(\weakconfig[k], \weakodom[k])$. 
    \item If the origin is empty, stop. Otherwise, repeat Steps 1--3.
\end{enumerate}

Define $\Ch = \Ch(\config, V) \ge 0$ to be the number of iterations completed, and note that $(\weakconfig[\Ch], \weakodom[\Ch])$ is the strongly stable state. Each time the jump-out step is completed, there is a {sleep trial}, which is successful if the particle falls asleep at least once before jumping out. If the first successful sleep trial occurs during iteration $k$, so only unstable sites have been toppled so far, the true stabilization is complete, ending at configuration $\config_k$ but with the particle at the origin put to sleep, so $\origin \in \Stab[V]\config$. If no sleep trial succeeds, the strongly stable state is the true stable state, and $\origin \notin \Stab[V]\config$.

\begin{lemma}\thlabel{lem:indie-trials}
    For any active configuration $\sigma\colon V \to \N$ on a finite $V \subset \Z^d$ containing the origin, \begin{equation}\label{eq:density-by-chances}
        \P(\origin \in \Stab[V]\config) = \sum_{k=1}^\infty\psleep\pjump^{k-1}\P\big(\Ch(\config, V) \ge k\big).
    \end{equation}
\end{lemma}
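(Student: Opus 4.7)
The plan is to recast $\{\origin \in \Stab[V]\config\}$ as an event about the sleep trials in the iterative procedure and then decouple the trial outcomes from $\Ch$. Writing $T_k \in \{0,1\}$ for the indicator that the $k$-th jump-out contains at least one sleep instruction, the discussion preceding the lemma already establishes (via the abelian property, together with the observation that halting the procedure at the first successful sleep in jump-out $k$ leaves a truly stable configuration with $\origin$ holding $\s$) the biconditional
\[
\origin \in \Stab[V]\config \iff T_k = 1 \text{ for some } k \le \Ch.
\]

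The heart of the proof is to argue that, extended to all $k$ by fresh i.i.d.\ $\Ber(\psleep)$ draws past $\Ch$, the trials $(T_k)_{k \ge 1}$ are i.i.d.\ $\Ber(\psleep)$ and jointly independent of $\Ch$. Just before the $k$-th jump-out, $\origin$ holds exactly one active particle, and the next instruction read from $\instr_\origin$ is a sleep with probability $\psleep$; this is the definition of $T_k$. The crucial observation is that sleep instructions during a jump-out are inconsequential to the configuration: acceptable toppling alternates $\origin$ between $1$-active and $\s$, touching no other site, so the net effect of iteration $k$'s jump-out is simply to move one particle to a uniform neighbor of $\origin$—a distribution independent of $T_k$. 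Combined with the memorylessness of $\instr_\origin$ (whose unused tail is i.i.d.\ regardless of how many instructions were previously consumed), an induction on $k$ upgrades this to joint independence of $(T_k)_{k \ge 1}$ from the sequence of jump destinations and weak-stabilization outcomes, and hence from $\Ch$.

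Once the independence is in hand, $N := \min\{k : T_k = 1\} \sim \mathrm{Geometric}(\psleep)$ is independent of $\Ch$, so
\[
\P(\origin \in \Stab[V]\config) = \P(N \le \Ch) = \sum_{k=1}^{\infty} \psleep\pjump^{k-1}\P(\Ch \ge k),
\]
which is (\ref{eq:density-by-chances}). The main obstacle is the clean formalization of the independence: intuitively it is immediate from the i.i.d.\ structure of $\instr_\origin$ and the fact that intermediate sleeps at $\origin$ have no net effect, but turning this into a rigorous argument probably calls for an explicit coupling that, at each jump-out, draws $T_k$, a geometric number of inconsequential sleeps, and a uniform jump direction as independent quantities, and then verifies that $\Ch$ is a deterministic function of the jump directions together with the non-origin instruction stacks—all independent of the $T_k$'s.
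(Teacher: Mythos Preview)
Your proposal is correct and follows exactly the decoupling argument that the paper defers to \cite[Proposition~7.12]{rollaSurvey}: the sleep trials $T_k$ are i.i.d.\ $\Ber(\psleep)$ and independent of $\Ch$ because sleep instructions at $\origin$ never influence the strong stabilization. One small refinement to your final coupling: the weak-stabilization steps can also consume instructions at $\origin$ (whenever two or more particles accumulate there), so $\Ch$ depends on the \emph{entire} subsequence of jump instructions in $\instr_\origin$, not only those read during jump-outs---but since sleeps at $\origin$ are equally inconsequential in that phase, the independence from the $T_k$'s goes through unchanged.
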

\begin{proof}
    The equation \eqref{eq:density-by-chances} is explained in the proof of \cite[Proposition 7.12]{rollaSurvey}. Note that $\Ch$ is equivalent to $T_V^s -1$ from \cite{rollaSurvey}. 
\end{proof}

By \eqref{eq:density-by-chances}, the density at the origin is exactly $1 - \E[\pjump^\Ch]$. Thus, the distribution of $\Ch$ holds a lot of information about the critical density. In the proof of the upper bound in \thref{prop:ub-lb}, we will use the following bound on the expected value of $\Secondchances = \Secondchances(\config, V) := \max(\Ch - 1,0)$ the number of \emph{additional chances} to fall asleep at the origin. 

\begin{lemma}\thlabel{lem:expected-chances}
    For any active configuration $\config$ on a finite set $V \subset \Z^d$ containing $\origin$, 
    $$\E\left[\Secondchances(V, \config)\right] \le \E[\Returns(\Z^d)].$$
\end{lemma}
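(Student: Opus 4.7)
The plan is to adapt the coupling argument of \cite{Taggi2019}, which shows $\E[\Ch] \le 1 + \E[\Returns(\Z^d)]$ via the abelian property and enforced activation. The key to sharpening this to $\E[\Secondchances] \le \E[\Returns(\Z^d)]$ is to extract a pointwise inequality between $\Ch - 1$ and the number of returns of a coupled random walk, rather than accepting an expectation-level bound with an additive slack of $1$.

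First, I would build a simple random walk $X$ on $\Z^d$ started at $\origin$ out of the instruction stack $\instr$: when $X$ visits $y$ for the $k$-th time, its next step is the direction of the $k$-th jump instruction appearing in $\instr_y$, with the intervening sleep instructions ignored. By the i.i.d.\ structure of the stacks (with each jump direction uniform over neighbors conditional on being a jump), $X$ is a simple random walk on $\Z^d$, and its number of returns to $\origin$ has the law of $\Returns(\Z^d)$.

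Second, I would run Taggi's argument on $\config + \delta_\origin$ in place of $\config$: equate the iterative strong stabilization of $\config$ at $\origin$ with the weak stabilization of $\config + \delta_\origin$ at $\origin$ through the abelian property, then topple in the order ``stabilize $\config$ weakly first, release the extra particle last''. The extra particle performs a walk through $V$ that, after accounting for enforced activation of any sleeping particles it encounters, has at most $\Returns(X)$ returns to $\origin$ before being absorbed by a sleep trial, killed at $\partial V$, or stalled. Each such return is in bijection with an additional chance, giving the pointwise bound $\Secondchances \le \Returns(X)$ on the event $\{\Ch \ge 1\}$; the bound is trivial on $\{\Ch = 0\}$ since then $\Secondchances = 0$.

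Taking expectation then yields
\[
    \E[\Secondchances] \le \E[\Returns(X)] = \E[\Returns(\Z^d)].
\]
The main obstacle is the odometer comparison in the second step: one must verify that enforced activations of sleeping particles---which can propagate and themselves visit $\origin$---do not contribute extra returns beyond those of the free walk $X$. Taggi's trick of comparing two distinct topple orderings of $\config + \delta_\origin$, and invoking abelianness to equate their odometers, supplies the required control. The specific modification here is to observe that the pre-step (iteration $0$) weak stabilization uses no instructions at $\origin$, so only iteration $1$'s jump-out---the first chance---is charged to the release of the tagged walker, and each of the remaining $\Ch - 1$ iterations bijects with a return of $X$, avoiding the additive $1$ that Taggi's in-expectation formulation carries.
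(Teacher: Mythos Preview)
Your plan follows the paper's broad strategy (relate strong stabilization of $\config$ to weak stabilization of $\config+\delta_\origin$, then compare two toppling orders), but the central claim---a \emph{pointwise} inequality $\Secondchances\le\Returns(X)$---is where it breaks. Two concrete issues. First, your walk $X$ is read off the \emph{initial} jump instructions in each stack, whereas every jump-out in iterations $k\ge 1$ uses instructions \emph{after} the random odometer $\weakodom[0]$; there is no pointwise link between $\Returns(X)$ and the number of post--pre-step returns, so the asserted bijection ``each additional chance $\leftrightarrow$ a return of $X$'' has no content as stated. Second, your ordering ``stabilize first, release the extra particle last'' forces the tagged particle to walk through the sleeping field $\weakconfig[0]$; each woken sleeper can itself reach $\origin$, and you do not explain how abelianness caps those visits by the returns of a single free walk. (Abelianness equates \emph{legal} odometers; the tagged walk requires \emph{acceptable} topplings, so you only get an inequality via least action, not an equality.)

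The paper avoids both problems by reversing the order: it sends the extra particle on a random walk \emph{first}, through the still-active $\config$ (so no sleepers are woken and the path is a genuine SRW with $\JumpCount(\odom_{\mathsf{RW}})\preceq_{st}\Returns(\Z^d)$), and \emph{then} weakly stabilizes $(\config,\odom_{\mathsf{RW}})$, producing an odometer $\weakodom'$ whose jump-into-$\origin$ count equals that of $\weakodom[0]$ \emph{in distribution} (not pointwise). The bound $\Secondchances\le \JumpCount(\strongodom)-\JumpCount(\weakodom[0])$ is pointwise, but the comparison $\JumpCount(\strongodom)\le \JumpCount(\odom_{\mathsf{RW}})+\JumpCount'(\weakodom')$ followed by the cancellation $\E[\JumpCount'(\weakodom')]=\E[\JumpCount(\weakodom[0])]$ only closes in expectation. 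That expectation-level cancellation is precisely what removes the additive $1$ from Taggi's bound, and it is the step your sketch replaces with an unjustified pointwise claim. To repair your argument you would need either to produce the distributional identity between the two weak-stabilization contributions, or to exhibit a coupling in which the post-$\weakodom[0]$ returns are dominated sample-by-sample by those of a free walk; the latter does not appear to hold.
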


\begin{proof}
    Denote the strong stabilizing odometer by $\strongodom$. For any odometer $\odom$, let $\JumpCount(\odom) = \JumpCount(\odom, \instr)$ count the number of jump instructions to $\origin$ under $\odom$, that is the number of $x \sim \origin$ and $\ell < \odom(x)$ such that $\instr_x(\ell) = \jumpinstr{\origin}$. Then \begin{equation}\label{eq:num-jumps-back-in}
        \Secondchances \le \JumpCount(\strongodom) - \JumpCount(\weakodom[0]),
    \end{equation}
    since for each iteration $1 \le k < \Ch$, after the $k^\text{th}$ jump-out a particle returns to the origin, leading to another iteration.

    Note that strongly stabilizing can be seen as placing an extra particle at the origin and then weakly stabilizing, so $\strongodom$ exactly equals the weak stabilizing odometer for $\config + \delta_\origin$. Consider the following procedure weakly stabilizing $\config+\delta_\origin$ via acceptable topplings: first, acceptably topple the extra particle until it exits $V$, using the odometer $\odom_\mathsf{RW}$ and leading to the original configuration $\config$. Clearly, \begin{equation}\label{eq:rw-exit}
        \JumpCount(\odom_\mathsf{RW}) \preceq_{st} \Returns[\Z^d].
    \end{equation}
    Finish the procedure by weakly stabilizing $(\config, \odom_\mathsf{RW})$, leading to the odometer $$\strongodom' = \odom_\mathsf{RW} + \weakodom'.$$ As the instructions after $\odom_\mathsf{RW}$ remain independent, we have 
    \begin{equation}\label{eq:weak-odoms-equal-in-d}
        \JumpCount(\weakodom') \overset{d}{=} \JumpCount(\weakodom[1]).
    \end{equation} Finally, by the least action principle, \begin{equation}\label{eq:acceptable-domination}
        \strongodom' \succeq \strongodom,
    \end{equation} and therefore, 
    \begin{align*}
        \E[\Secondchances] 
            &\le \E[\JumpCount(\strongodom) - \JumpCount(\weakodom[1])] &&\text{(by \eqref{eq:num-jumps-back-in})}\\
            &\le \E[\JumpCount(\strongodom')] - E[\JumpCount(\weakodom')] &&\text{(by \eqref{eq:acceptable-domination} and \eqref{eq:weak-odoms-equal-in-d})}\\
            &= \E[\JumpCount(\odom_\mathsf{RW})]\\
            &\le \E[\Returns(\Z^d)] &&\text{(by \eqref{eq:rw-exit}).}
    \end{align*}
    
\end{proof}

Finally, we demonstrate the unsurprising fact that a non-fixating configuration can fill any finite set of vertices $U \subset \Z^d$ with an active particle at each vertex. Say that configuration $\configg$ \emph{fills} $x$ if $\configg(x) = 1$, and fills $U$ if it fills every $x \in U$.

\begin{lemma}\thlabel{lem:fill}
    Let $\config$ be a translation-ergodic configuration on $\Z^d$ such that the fixed-energy system starting at $\config$ a.s.\ stays active. Let $V_n \subseteq \mathbb Z^d$ be any non-decreasing sequence of sets (in terms of containment) that converges to $\mathbb Z^d$.
    For any finite $U \subset \Z^d$, the configuration $$\config_{V_n, \Weak} := \WeakStab[V_n,U]\,\config$$ fills $U$ with high probability as $n \to \infty$. 
\end{lemma}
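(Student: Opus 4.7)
The plan proceeds in three steps.

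First (monotonicity at sites of $U$): weak stabilization topples $x \in U$ only when $\config(x) \ge 2$. A jump instruction then reduces the active count from $\ge 2$ to $\ge 1$, while a sleep instruction is ignored because more than one active particle is present. Since $\config$ deposits no sleeping particle at $U$ and weak stabilization creates none there, the active count at $x$ never drops below $1$ once it reaches $1$. Hence $\config_{V_n, \Weak}(x) \in \{0, 1\}$ and equals $1$ exactly when $x$ is visited by an active particle during the procedure.

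Second (arrival given odometer divergence): fix $x \in U$ with $\config(x) = 0$ and any neighbor $y$ of $x$. The first index $T$ for which $\instr_y(T) = \jumpinstr{x}$ is geometric with parameter $\pjump/(2d)$, hence almost surely finite. Thus if $\WeakOdom[V_n, U]\,\config(y) \to \infty$ in probability, the event $\{\WeakOdom[V_n, U]\,\config(y) > T\}$ has probability tending to one; on this event the instruction $\instr_y(T)$ has been executed by weak stabilization, producing at least one arrival at $x$.

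Third (odometer divergence): I would show $\WeakOdom[V_n, U]\,\config(y) \to \infty$ in probability for every $y \in \Z^d$. Monotonicity in $V_n$ guarantees that the pointwise limit $\WeakOdom[\infty, U]\,\config(y)$ exists in $\N \cup \{\infty\}$. If $\WeakOdom[\infty, U]\,\config(y_0) = \infty$ at some site $y_0$, then the strong law applied to the i.i.d.\ stack $\instr_{y_0}$ forces infinitely many arrivals at every neighbor $y_1$ of $y_0$; each arrival triggers a topple at $y_1$ (immediately if $y_1 \notin U$ and upon accumulating a second particle if $y_1 \in U$), so $\WeakOdom[\infty, U]\,\config(y_1) = \infty$ and the divergence propagates throughout the connected graph $\Z^d$. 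The alternative---that $\WeakOdom[\infty, U]\,\config$ is finite at every site---is excluded by passing to the limiting weakly stable state, which carries at most $|U|$ active particles in $U$, and continuing to a truly stable state via acceptable topplings; the termination of this continuation, together with \thref{prop:site-fixation}, would contradict the assumed non-fixation of $\config$.

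Combining the three steps with a union bound over the finite set $U$ yields the conclusion. The main obstacle is the exclusion of the finite-$\WeakOdom[\infty, U]\,\config$ scenario in the third step: one must carefully control the continuation cascade---in particular, the possible wake-ups of sleeping particles deposited during weak stabilization---to argue that it terminates almost surely on $\Z^d$ and therefore forces fixation of $\config$.
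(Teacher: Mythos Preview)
Your third step contains a genuine gap, and you have correctly identified it as the main obstacle. Excluding the finite-weak-odometer alternative by ``continuing to a truly stable state via acceptable topplings'' asks you to show that finitely many active particles, sitting on the field of sleeping particles deposited by the weak stabilization of all of $\Z^d$, themselves fixate. But that field carries essentially the full density $\density > \critdens$ of the original configuration (mass is conserved and at most $|U|$ particles remain active), so there is no reason the continuation cascade terminates; a single active particle on a supercritical density of sleepers is expected to stay active forever. The implication you need---weak limiting odometer finite everywhere $\Rightarrow$ true limiting odometer finite somewhere---is therefore not available, and there is no easy bound on the gap between the two odometers.

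The paper avoids this difficulty by never leaving the finite box $V_n$ and by reversing the direction of comparison. Rather than showing $\WeakOdom[V_n,U]\,\config \to \infty$, it observes that if $x \in U$ is not filled then the weak odometer at $x$ is $0$, and that on the event (of conditional probability at least $\psleep^{|U|-1}$, by independence of the unused instructions) that each of the at most $|U|-1$ remaining active particles in $U \setminus \{x\}$ falls asleep on its very next instruction, the configuration becomes truly stable with $\Odom[V_n]\,\config(x) = 0$ as well. Since $\Odom[V_n]\,\config(x) \to \infty$ a.s.\ by \thref{prop:site-fixation}, $\P\big(\Odom[V_n]\,\config(x) = 0\big) \to 0$, which forces $\P(x \text{ not filled}) \to 0$. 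This ``all fall asleep immediately'' trick is the missing idea: it converts your infinite-cascade problem on $\Z^d$ into a one-line probabilistic bound on a finite box. (A minor side issue: your first step tacitly assumes $\config(x) \ne \s$ for $x \in U$, which the lemma does not; $\config_{V_n,\Weak}(x) = \s$ is possible if $\config(x) = \s$ and no active particle arrives.)
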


\begin{proof}
    Take $n$ large enough that $V_n \supseteq U$, and let $\odom_{V_n}$ and $\odom_{V_n, \Weak}$
    be the stabilizing and weak stabilizing odometers for $\config$ on $V_n$. Take $x \in U$ and suppose $\config_{V_n, \Weak}$ does not fill $x$, and so $\config_{V_n, \Weak}(x) \in \{0, \s\}$ as it is weakly stable. Then $\odom_{V_n, \Weak}(x)=0$, since $x$ is only toppled during weak stabilization if it has multiple active particles, after which point it always retains at least one and so ends the weak stabilization filled. If then each active particle on $U \setminus \{x\}$ immediately falls asleep, $V_n$ is stabilized with $\odom_{V_n}(x) = 0$. Since there are at most $|U| - 1$ such particles, and the remaining instructions are independent after weak stabilization, \begin{align*}
        \P\big(\odom_{V_n}(x) = 0\big) \ge \P\big(\config_{V_n, \Weak}(x) \neq 1\big)\psleep^{|U| - 1}.
    \end{align*}

    But by \thref{prop:site-fixation}, $\odom_{V_n}(x) \nearrow \infty$ a.s., and so $\odom_{V_n}(x) = 0$ with vanishing probability. Therefore, with high probability \ $\config_{V_n, \Weak}$ fills $x$, as well as all of the finitely many sites in $U$.
\end{proof}

We will need the following consequence of \thref{lem:fill} which will allow us to compare $\critdens$ and $\P(\origin \in \Stab[V] \configg)$ to prove the lower bound in \thref{prop:ub-lb}.

\begin{lemma}\thlabel{cor:fill}
    For any finite $U \subset \Z^d$, $\critdens \ge \density^-_U$ where
    $$
        \density^-_U := \inf_{\substack{\tau \text{ fills } U \\ \text{finite } V \supseteq U}} \P(\origin \in \Stab[V] \configg).
    $$
\end{lemma}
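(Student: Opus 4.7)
The plan is to prove the contrapositive: any density $\density > \critdens$ satisfies $\density \ge \density^-_U$, which gives $\critdens \ge \density^-_U$. I would fix a translation-ergodic configuration $\config$ of density $\density$; by \thref{prop:site-fixation} the system a.s.\ stays active, so the stabilizing odometers $\odom_{V_n}(y)$ diverge a.s.\ for every fixed $y$.

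The first step is a pointwise lower bound $\P(x \in \Stab[V_n]\config) \ge \P(\configg_x \text{ fills } U+x) \cdot \density^-_U$ for each $x$ with $U+x \subseteq V_n$, where $\configg_x := \WeakStab[V_n, U+x]\config$. I would first weakly stabilize $V_n$ with respect to the shifted set $U+x$; by the abelian property, $\Stab[V_n]\config = \Stab[V_n](\configg_x, \WeakOdom[V_n, U+x]\config)$, and conditional on this weak-stabilization data the remaining instructions are fresh i.i.d., so the continuation has the same distribution as stabilizing the deterministic $\configg_x$ from scratch. On the event that $\configg_x$ fills $U+x$, translation by $-x$ together with translation invariance of $\instr$ puts us in the setting of the defining infimum of $\density^-_U$ on the finite domain $V_n - x \supseteq U$.

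Next, I would promote this to a bound uniform over the deep interior $V_n^\circ := \{x : V_n - x \supseteq V_{m+r}\}$, with $r := \max_{u \in U}\|u\|_\infty$ and some $\eta_m \to 0$ as $m \to \infty$, by showing $\P(\configg_x \text{ fills } U+x) \ge 1 - \eta_m$ for $x \in V_n^\circ$. Adapting the proof of \thref{lem:fill}, if $\configg_x(z) \neq 1$ for some $z \in U+x$, then sleeping the other $|U|-1$ particles in $U+x$ leaves the true stabilization with $\odom_{V_n}(z) = 0$, so $\P(\configg_x(z) \neq 1) \le \psleep^{-(|U|-1)}\P(\odom_{V_n}(z) = 0)$. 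Translation invariance gives $\P(\odom_{V_n}(z) = 0) = \P(\odom_{V_n - z}(\origin) = 0)$, and since $V_n - z \supseteq V_m$ for $x \in V_n^\circ$ and $z \in U+x$, the monotonicity of the stabilizing odometer in $V$ (least action principle) bounds this uniformly by $\P(\odom_{V_m}(\origin) = 0)$, which vanishes with $m$. A union bound over $z \in U+x$ yields the desired $\eta_m$.

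To finish, I would combine the two estimates with the deterministic mass-conservation inequality
\[
    \density |V_n| = \E \sum_{x \in V_n} |\config(x)| \ge \sum_{x \in V_n^\circ} \P(x \in \Stab[V_n]\config) \ge (1 - \eta_m) \density^-_U |V_n^\circ|.
\]
Amenability of $\Z^d$ gives $|V_n^\circ|/|V_n| \to 1$ for each fixed $m$, so letting $n \to \infty$ then $m \to \infty$ yields $\density \ge \density^-_U$. The main obstacle is the second step: passing from the pointwise statement of \thref{lem:fill} to a uniform-in-$x$ bound on a set of density one in $V_n$. Monotonicity of the true stabilizing odometer in $V$, together with translation invariance of $(\config, \instr)$, is the key tool that reduces the uniform estimate to the pointwise divergence $\odom_{V_m}(\origin) \nearrow \infty$ a.s.
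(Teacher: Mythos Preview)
Your proof is correct and follows the same overall strategy as the paper: take supercritical $\density$, lower bound $\P(x \in \Stab\config)$ by $\density^-_U$ times a fill probability via weak stabilization, then combine mass conservation with amenability and let the fill probability tend to $1$. The one structural difference is in how the fill probability is made uniform in $x$. The paper uses a two-scale decoupling: it first weakly stabilizes with respect to $U$ only on an \emph{inner} box $V_n$, producing a single configuration $\configg_n = \WeakStab[V_n,U]\config$ and a single fill probability $\P(\configg_n \text{ fills } U)$ that does not depend on the outer domain; then for any larger $V_N$ and any $x \in V_{N-n}$, translation invariance gives $\P(x \in \Stab[V_N]\config) \ge \density^-_U\,\P(\configg_n \text{ fills } U)$ with the \emph{same} factor for every such $x$, so \thref{lem:fill} applies verbatim at the end. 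You instead weakly stabilize on the full $V_n$ with respect to the shifted set $U+x$, which makes the fill probability depend on the distance from $x$ to $\partial V_n$; you then recover uniformity over a deep interior $V_n^\circ$ by combining translation invariance with monotonicity of the stabilizing odometer in the domain, in effect proving a uniform-in-$x$ version of \thref{lem:fill}. Both routes are valid; the paper's two-scale trick is slightly shorter because it sidesteps the uniformity argument entirely, while your version is more self-contained and makes the role of odometer monotonicity explicit.
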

\begin{proof}
    Take some translation-ergodic active configuration $\config\colon \Z^d \to \N$ with density $\density > \critdens$. We will show $\density \ge \density^-_U$. 
    
    For any $V_n \supseteq U$ and finite $V \supseteq  V_n$, begin stabilizing $\config$ on $V$ by attempting to fill $U$, running the weak stabilizing odometer for $U$ on $V_n$. The configuration on $V_n$ after this attempt is then $$\configg_{n} := \WeakStab[V_n, U]\,\config.$$
    Therefore, $$\P(\origin \in \Stab[V] \config) \ge \density^-_U \cdot \P(\configg_{n} \text{ fills } U) =: \density'_U.$$
    For any $N \ge n$ and $x \in V_{N - n}$ then, by translation invariance we have $$\P(x \in \Stab[V_N] \config) \ge \density'_U$$
    and so \begin{equation}\label{eq:density-lb}
        \frac{\E\left|\Stab[V_N] \config\right|}{|V_N|} \ge \frac{\density'_U|V_{N-n}|}{|V_N|}.
    \end{equation}
    On the other hand, since $|\Stab[V_N] \config| \le |\config|$ we have \begin{equation}\label{eq:density-ub}
        \density \ge \frac{\E\left|\Stab[V_N] \config\right|}{|V_N|}.
    \end{equation}
    Combining \eqref{eq:density-lb} and \eqref{eq:density-ub} and taking $N \to \infty$ we get that $$\density \ge \density'_U = \density^-_U \cdot \P(\configg_{n} \text{ fills } U).$$
    Taking $n \to \infty$ then, by \thref{lem:fill} we have $\density \ge \density^-_U$.
\end{proof}

\section{Proof of \thref{prop:ub-lb}}\label{sec:proofs}

\subsection{Upper Bound}
    For the upper bound, fix $\density >  \psleep + \psleep\pjump \E[\Returns[\Z^d]]$ and take some i.i.d.\ active initial configuration $\config$ with mean $\density$. For any finite $V$, 
    \begin{align*}
        \P(\origin \in \Stab[V] \config) 
            &= \sum_{k=1}^\infty \psleep \pjump^{k-1} \P(\Ch \ge k) &&\text{(by \thref{lem:indie-trials})}\\
            &\le \psleep + \psleep\pjump \sum_{k=2}^\infty \P(\Ch \ge k)\\
            &= \psleep + \psleep\pjump \sum_{k=1}^\infty \P(\Secondchances \ge k)\\
            &= \psleep + \psleep\pjump \E[\Secondchances]\\
            &< \density &&\text{(by \thref{lem:expected-chances})}.
    \end{align*}
    By \thref{prop:mass-conservation} then, $\config$ is non-fixating, so $\critdens \le \density$. Since $\density$ is arbitrary, we obtain our claimed upper bound.

\subsection{Lower Bound}
    Let $B_1$ be the ball that includes the origin and its neighbors. Take any finite $B_1 \subseteq V \subset \Z^d$ and configuration $\configg$ that fills $B_1$. We break the strong stabilization of $V$ into five steps. Note that automatically $\Ch = \Ch(\configg, V) \ge 1$, as $\configg(\origin) = 1$.

    \begin{description}
        \item[\textbf{Step 1}] Weakly stabilize $V$ with respect to $\origin$, ending with exactly one active particle at $\origin$.
        \item[\textbf{Step 2}] Jump the particle out of the origin. It visits the uniformly random site $X \in \{x \colon x \sim \origin\}$ which now holds either one or two active particles. 
        \item[\textbf{Step 3}] If there are two particles at $X$, jump one out; otherwise do nothing. 
        \item[\textbf{Step 4}] Topple the lone particle at $X$ once. 
        \item[\textbf{Step 5}] Finish the strong stabilization. 
    \end{description}

Notice that if at either Step 3 or 4 a particle jumps from $X$ to the origin, we will have $\Ch \ge 2$. Let $\Jump_1$ denote this happening during Step 3, and $\Jump_2$ during Step 4, so 
\begin{equation}\label{eq:jump1-or-jump2}
    \P(\Ch \ge 2) \ge \P(\Jump_1 \cup \Jump_2).
\end{equation}

Since Step 4 occurs with independent instructions, 
   \begin{equation} \label{eq:jump-2}
\P(\Jump_2) = \frac{\pjump}{2d}
\quad \text{and} \quad
\Jump_2 \perp\!\!\!\perp \Jump_1.
\end{equation} 
Let $\configg_1$ denote the configuration after Step 1. Since there are two particles to start Step 3 if and only if $\configg_1(X) = \s$ we may infer that
\begin{equation} \label{eq:jump-1}
    \P(\Jump_1) = \frac{\P\big(\configg_1(X) = \s\big)}{2d}.
\end{equation}
To see why \begin{equation} \label{eq:p-two}
    \P\big(\configg_1(X) = \s \big) \ge \psleep,
\end{equation}
note that for each $x \sim \origin$, $$\P\big(\configg_1(x) = \s\big) \ge \psleep,$$
as can be seen by completing Step 1 in the following order: first, weakly stabilize with respect to $\{\origin, x\}$. Next, topple $x$. If doing so executed a sleep instruction, Step 1 ends, with a sleeping particle at $x$. Equation \eqref{eq:p-two} follows since, conditional on $\tau_1$, $X$ is still uniformly distributed. 

    By equations \eqref{eq:jump1-or-jump2}--\eqref{eq:p-two},
    \begin{equation*}
        \P(\Ch \ge 2) \ge \frac{\psleep}{2d} + \frac{\pjump}{2d} - \frac{\psleep}{2d}\frac{\pjump}{2d} = \frac{1}{2d}\left(1 - \frac{\psleep\pjump}{2d}\right).
    \end{equation*}
    Combining this with \thref{lem:indie-trials}, and using that $\Ch \ge 1$ as $\configg$ fills $\origin$, we get
     \begin{align*}
        \P(\origin \in \Stab[V]\configg) \ge \psleep\P(\Ch \ge 1) + \psleep \pjump\P(\Ch \ge 2) \ge  \psleep + \frac{\psleep\pjump}{2d}\left(1 - \frac{\psleep\pjump}{2d}\right).
    \end{align*}
    We are done by \thref{cor:fill}.

\section{Extensions} \label{sec:extensions}

The lower bound of \thref{prop:ub-lb} holds on any infinite vertex-transitive graph $G$ with finite degree, with $2d$ replaced by the degree. For non-amenable graphs, one can use the fact proved in \cite{Taggi2019} that even on a non-amenable vertex-transitive graph, the density cannot increase uniformly away from the boundary after stabilization. Moreover, any translation-invariant jump distribution with finite support is permitted. 

For the upper bound of \thref{prop:ub-lb}, conservation of mass is needed, which has been shown for {unimodular walks}, i.e.\ where the jump distribution is invariant under a transitive unimodular subgroup of $\operatorname{Aut}(G)$ with infinite orbits. Examples include translation-invariant walks on vertex-transitive amenable graphs and simple random walk on $d$-regular trees. Combining this with the known lower bound $\rho_c \geq \psleep$ implies that the critical density is $\Theta(\lambda)$ for transient walks. This was recently established for $\Z^d$ in transient dimensions by Asselah, Forien, and Gaudilli\`ere \cite{AsselahAmine2024Tcdf} via a sophisticated toppling scheme. Our argument is shorter, and holds for all unimodular walks, improving the $O(\sqrt{\lambda})$ bound from \cite{Taggi2019} for amenable graphs. We are able to avoid the amenability assumption by using mass conservation in place of Rolla and Tournier's criterion that ARW stays active if a positive fraction of particles leave a large finite graph \cite{rt}. 
Finally then, \thref{thm:high-d} holds for $d$-regular trees, replacing $2d$ with $d$, as $\E[\Returns(\mathbb{T}^d)] = 1/(d - 2) = d^{-1} + O(d^{-2})$.

\section*{Acknowledgements}
Junge was partially supported by NSF DMS Grant 2238272. Kaufman was partially supported by NSF DMS Grants 2238272 and 2349366. Part of this research was conducted during the 2025 Baruch College Discrete Mathematics NSF Site REU.

\bibliographystyle{amsalpha}
\bibliography{BA.bib}

\providecommand{\bysame}{\leavevmode\hbox to3em{\hrulefill}\thinspace}
\providecommand{\MR}{\relax\ifhmode\unskip\space\fi MR }
\providecommand{\MRhref}[2]{%
  \href{http://www.ams.org/mathscinet-getitem?mr=#1}{#2}
}
\providecommand{\href}[2]{#2}
\begin{thebibliography}{BTW87}

\bibitem[AFG24]{AsselahAmine2024Tcdf}
Amine Asselah, Nicolas Forien, and Alexandre Gaudillière, \emph{The critical density for activated random walks is always less than 1}, The Annals of probability \textbf{52} (2024), no.~5.

\bibitem[AGG10]{AmirGurel-Gurevich10}
Gideon Amir and Ori Gurel-Gurevich, \emph{On fixation of activated random walks}, Electron. Commun. Probab. \textbf{15} (2010), 119--123.

\bibitem[BTW87]{bak1987soc}
P.~Bak, C.~Tang, and K.~Wiesenfeld, \emph{Self-organized criticality: An explanation of the $1/f$ noise}, Physical Review Letters \textbf{59} (1987), no.~4, 381--384.

\bibitem[DRS10]{dickman2010activated}
Ronald Dickman, Leonardo~T. Rolla, and Vladas Sidoravicius, \emph{Activated random walkers: facts, conjectures and challenges}, J. Stat. Phys. \textbf{138} (2010), no.~1-3, 126--142. \MR{2594894}

\bibitem[HS04]{HS04}
Christopher Hoffman and Vladas Sidoravicius, Unpublished, 2004.

\bibitem[JMT23]{completeGraph}
Antal~A. J\'arai, Christian M\"onch, and Lorenzo Taggi, \emph{The critical window in activated random walk on the complete graph}, 2023, arXiv:2304.10169.

\bibitem[Rol20]{rollaSurvey}
Leonardo~T. Rolla, \emph{Activated random walks on $\mathbb{Z}^d$}, Probability Surveys \textbf{17} (2020), 478--544.

\bibitem[RS12]{rs}
Leonardo~T. Rolla and Vladas Sidoravicius, \emph{Absorbing-state phase transition for driven-dissipative stochastic dynamics on {${\mathbb Z}$}}, Invent. Math. \textbf{188} (2012), no.~1, 127--150. \MR{2897694}

\bibitem[RSZ19]{universalityTheorem}
Leonardo~T. Rolla, Vladas Sidoravicius, and Olivier Zindy, \emph{Universality and sharpness in absorbing-state phase transitions}, Annales de l’Institut Henri Poincaré, Physique Théorique \textbf{20} (2019), 1823--1835.

\bibitem[RT18]{rt}
L.~T. Rolla and L.~Tournier, \emph{Non-fixation for biased activated random walks}, Annales de l'I.H.P. Probabilités et statistiques \textbf{54} (2018), no.~2.

\bibitem[ST18]{StaufferTaggi18}
Alexandre Stauffer and Lorenzo Taggi, \emph{Critical density of activated random walks on transitive graphs}, Ann. Probab. \textbf{46} (2018), no.~4, 2190--2220. \MR{3813989}

\bibitem[Tag19]{Taggi2019}
Lorenzo Taggi, \emph{Active phase for activated random walks on $\mathbb{Z}^d$, $d \geq 3$, with density less than one and arbitrary sleeping rate}, Annales de l’Institut Henri Poincaré, Probabilités et Statistiques \textbf{55} (2019), no.~4, 1751--1764.

\bibitem[TBK21]{high-d-rw}
Ido Tishby, Ofer Biham, and Eytan Katzav, \emph{Analytical results for the distribution of first return times of random walks on random regular graphs}, Journal of physics. A, Mathematical and theoretical \textbf{54} (2021), no.~32.

\end{thebibliography}

\end{document}